\DeclareMathOperator{\diff}{d}
\DeclareMathOperator{\Ro}{Ro}
\DeclareMathOperator{\Fr}{Fr}
\newcommand{\MM}[1]{{\boldsymbol #1}}
\newcommand{\dd}[2]{\frac{\diff#1}{\diff#2}}
\newcommand{\rem}[1]{}
\newtheorem{remark}[theorem]{Remark}
\newtheorem{example}[theorem]{Example}
\newcommand{\pdgp}{$\textrm{P1}_{\textrm{DG}}$-$\textrm{P2}$\xspace}
\newcommand{\pndgpn}{$\textrm{Pn}_{\textrm{DG}}$-$\textrm{P(n+1)}$\xspace}
\begin{document}

\author{Colin Cotter\thanks{Department of Aeronautics, Imperial 
    College London, Prince Consort Road, London SW7 2AZ,
        UK ({\tt colin.cotter@imperial.ac.uk}).}}

\title{A family of mixed finite element pairs with optimal geostrophic 
balance}

\maketitle

\begin{abstract}
We introduce a family of mixed finite element pairs for use
  on geodesic grids and with adaptive mesh refinement for numerical
  weather prediction and ocean modelling. We prove that when these
  finite element pairs are applied to the linear rotating shallow
  water equations, the geostrophically balanced states are exactly
  steady, which means that the numerical schemes do not introduce any
  spurious inertia-gravity waves; this makes these finite element
  pairs in some sense optimal for numerical weather prediction and
  ocean modelling applications. We further prove that these finite
  element pairs satisfy an inf-sup condition which means that they are
  free of spurious pressure modes which would pollute the numerical
  solution over the timescales required for large-scale geophysical
  applications. We then discuss the extension to incompressible
  Euler-Boussinesq equations with rotation, and show that for the
  linearised equations the balanced states are again exactly steady on
  arbitrary unstructured meshes. We also show that the discrete
  pressure Poisson equation resulting from these discretisations
  satisfies an optimal stencil property. All these properties make the
  discretisations in this family excellent candidates for numerical
  weather prediction and large-scale ocean modelling applications when
  unstructured grids are required.
\end{abstract}

\begin{keywords}
Geophysical wave propagation, mixed finite elements, spurious modes
\end{keywords}

\begin{AMS}
65M60,  
86A10, 
86A05 
\end{AMS}


\pagestyle{myheadings}
\thispagestyle{plain}
\markboth{C. COTTER}{Mixed elements with optimal geostrophic balance}

\section{Introduction}

The aim of this paper is to introduce a new family of finite element
pairs for discretising large-scale ocean and atmosphere flows, state
theorems about their wave propagation properties, and to explain why
these properties are important for numerical weather prediction and
ocean modelling. Section \ref{background} provides some motivational
background, section \ref{model} describes the balance properties of
model equations that we aim to preserve exactly in the numerical
discretisations, and section \ref{geostrophic states} explains how
these properties can be modified by numerical discretisations. Section
\ref{family} introduces the family of finite element pairs, and the
optimal balance property is proved in section \ref{properties}. It is
also shown, by proving an inf-sup condition, that the finite element
pairs are free from spurious pressure modes. In section \ref{3d} these
properties are extended to the case of three-dimensional rotating
stratified incompressible flow which applies to non-hydrostatic ocean
modelling. Finally, section \ref{summary} provides a summary and
outlook to future developments.

\subsection{Background}
\label{background}
An operational weather forecasting system combines observational data
(such as satellite images, and pressure measurements on the Earth's
surface, for example) with a numerical model of the atmosphere (solved
on state-of-the-art parallel computers) to produce weather
forecasts. The numerical model consists of a mathematically-consistent
discretisation of the equations of motion for the atmosphere (known as
the dynamic core) together with schemes (known as parameterisations)
for representing subgrid scale physics such as convection and
turbulence, and physical processes such as cloud formation and
atmospheric chemistry.  Most current state-of-the art numerical
weather prediction (NWP) models, such as the MET Office Unified Model
\cite{Da+2005}, make use of a latitude-longitude grid to construct the
numerical approximations to the fluid dynamical equations that form
the \emph{dynamical core} of the model. However, recently there has
been growing interest in more general horizontal discretisation
schemes constructed using triangles or hexagons. This is for two
reasons. Firstly, geodesic grids (which are obtained by iterative
refinement of an icosahedron using triangles or the dual grid which
comprises a combination of hexagons plus exactly 12 pentagons) provide
a much more uniform coverage over the sphere, which has possible
advantages for accurate representation of wave propagation and avoids
very fine grid cells near to the North and South poles. A number of
groups are now using geodesic grids for weather and climate models
\cite{Ri+2000,Ma+2002,Sa+2008}. Secondly, triangles facilitate
adaptive mesh refinement much more easily, allowing regional models to
be nested seamlessly in a global model, and even allowing dynamic mesh
refinement in which the mesh is dynamically adapted during a
forecast. However, the introduction of adaptively-refined triangular
grids calls for the careful design of new numerical schemes which
correctly represent the large scale geophysical balances so that model
forecasts can be run over sufficiently long times (the current
forecast window is about a week). In this paper we introduce a new
family of numerical discretisation methods on triangular grids which
shall be shown to optimally represent these geophysical balances on
arbitrary unstructured grids. We also extend these properties to three
dimensional rotating stratified incompressible flow so that they may
be applied to non-hydrostatic ocean modelling.

\subsection{Model equations and geostrophic states}
\label{model}
As a model problem, we consider the
shallow-water equations on an $f$-plane
\begin{eqnarray}
\label{swe 1}
\MM{u}_t + (\MM{u}\cdot\nabla)\MM{u} + f\MM{u}^\perp
+ g\nabla D &=& 0, \quad \MM{u} = (u_1,u_2), \quad \MM{u}^\perp = (-u_2,u_1), \\
\quad D_t + \nabla\cdot(\MM{u}D) &=& 0, \label{swe 2}
\end{eqnarray}
where $\MM{u}$ is the horizontal velocity, $D=\bar{D}+\eta$ is the
total layer depth, $\eta$ is the perturbation layer thickness,
$\bar{D}$ is the mean layer thickness, $\MM{k}$ is the unit vector in
the $z$-direction, $f$ is the (constant) Coriolis parameter and $g$ is
the acceleration due to gravity. The boundary conditions are
\begin{equation}
\label{bcs}
\MM{u}\cdot\MM{n} = 0 \quad\mathrm{on}\quad \partial\Omega,
\end{equation}
where $\partial\Omega$ denotes the boundary of the two-dimensional
domain $\Omega$, and $\MM{n}$ is the normal to $\partial\Omega$. These
equations model the dynamics of a layer of hydrostatic incompressible
fluid with constant density with a free surface and columnar motion so
that the horizontal velocity is independent of depth. This can be
thought of as a simple model for the ocean, or for a single layer in
the atmosphere. For a derivation of these equations, see
\cite{Sa1998}, for example. Numerical methods for NWP and ocean
modelling are often developed in two dimensions using the rotating
shallow-water equations. The methods can then be extended to the
\emph{primitive equations} (three-dimensional equations of rotating
stratified hydrostatic flow, see \cite{Sa1998}) by building a mesh in
layers in which the two-dimensional discretisation is applied in the
horizontal plane. Hence, the shallow-water equations provide a useful
testbed for numerical schemes and a stepping stone to the primitive
equations which already poses many of the key challenges for designing
good numerical methods for NWP and ocean
modelling.

A key dimensionless number in geophysical fluid dynamics is the Rossby
number, defined by
\[
\Ro = U/fL,
\]
where $U$ is a typical velocity scale, and $L$ is a typical horizontal
length scale; the Rossby number measures the relative importance of
the acceleration and Coriolis terms. Geophysical flow problems in the
small Rossby number limit are concerned with the states which satisfy
\[
f\MM{u}^\perp 
+ g\nabla D \approx 0
\]
so that the Coriolis term approximately balances the pressure
gradient. This is the state of \emph{geostrophic balance}. It has long
been observed (see \cite{McNo2000,MoDr2001}, for example) that if the
initial conditions for the state variables $\MM{u}_0(\MM{x})$ and
$D_0(\MM{x})$ are initialised near to geostrophic balance, then this
state will be approximately preserved for very long times. This has
observed to be the case both in the low Rossby number limit, and also
in the $\mathcal{O}(1)$ limit and various mechanisms have been
proposed for the maintenance and breakdown of this balance in various
parameter regimes \cite{FoMcNo2000,CoRe2006,OlOlVa2008}). Large scale
flows in the atmosphere and ocean (such as those associated with the
global circulation that determines global weather and climate) are
observed to be in a state of geostrophic balance and hence it is
important that numerical schemes used for weather forecasting, ocean
modelling and climate prediction can correctly represent these
balances.

The ability of a numerical scheme to represent geostrophic balance can
be examined by studying wave propagation properties. Choosing a flat
topography so that $\bar{D}$ is a constant, the linearised shallow-water
equations are
\begin{eqnarray}
\label{lswe 1}
\MM{u}_t + f\MM{u}^\perp 
+ g\nabla \eta &=& 0, \\
\quad \eta_t + \bar{D}\nabla\cdot\MM{u} &=& 0. \label{lswe 2}
\end{eqnarray}
If the velocity $\MM{u}$ and free surface elevation $\eta$ are chosen
in a state of perfect geostrophic balance so that
\[
f\MM{u}^\perp + g\nabla\eta=0,
\]
then the velocity divergence satisfies
\[
\nabla\cdot\MM{u} = \frac{g}{f}\nabla\cdot\nabla^\perp\eta
= -\eta_{yx}+\eta_{xy} = 0, \qquad \nabla^\perp = (-\partial_y,\partial_x)
\]
and so we have a steady state
\[
\MM{u}_t = 0, \quad \eta_t = 0.
\]
Hence, all geostrophically-balanced states are steady for the
linearised equations. In the nonlinear case, the solutions variables
evolve through the nonlinear advection terms which are small in the
low Rossby number limit, and hence the geostrophic balance is
maintained as a form of adiabatic invariance due to the small
parameter $\Ro$. 

If periodic boundary conditions are chosen, then one can perform a dispersion
analysis for solutions of equations (\ref{lswe 1}-\ref{lswe 2}) by 
substituting the \emph{ansatz}
\[
\MM{u}=\hat{\MM{u}}e^{i\left(\MM{k}\cdot\MM{x}-\omega t\right)},
\qquad \eta=\hat{\eta}e^{i\left(\MM{k}\cdot\MM{x}-\omega t\right)},
\]
resulting in the dispersion relation
\begin{equation}
\label{dispersion}
\omega\left(\omega^2 - f^2 - g\bar{D}\left(k^2+l^2\right)\right)=0.
\end{equation}
This equation has three roots, with the $\omega=0$ root corresponding
to the steady geostrophic state; this branch becomes the \emph{Rossby
  wave} branch when the model is extended to the $\beta$-\emph{plane
  model} in which the Coriolis parameter $f$ varies in the
$y$-direction so that $f=f_0+\beta y$ for constants $f_0$ and
$\beta$. The other two roots give rise to dispersive waves, known as
\emph{inertia-gravity waves}; the $k=l=0$ case is known as an
\emph{inertial oscillation} in which the fluid undergoes solid body
motion with a flat free surface.  Since all of the roots are real, the
state of geostrophic balance is stable under small perturbations.

\subsection{Geostrophic states for numerical methods}
\label{geostrophic states}
It is crucial that numerical methods for equations (\ref{swe
  1}-\ref{swe 2}) do not generate spurious inertia-gravity waves when
the solution is near to geostrophic balance. This can typically occur
if the wave-propagation properties of the numerical method
(\emph{i.e.} the numerical discretisation of the linearised equations)
do not correctly represent this balance.  Given a discretisation of
the linear system (\ref{lswe 1}-\ref{lswe 2}) it is simple to check
the evolution of geostrophically-balanced states under this
discretisation. One constructs initial conditions which satisfy the
discrete form of geostrophic balance, steps the variables forward in
time, and inspects the variables to check that the steady state is
approximately preserved. This analysis has been performed for various
element pairs in \cite{RoStLi1998}. In general, the discrete
divergence of the velocity field will not be exactly zero, and the
remainder due to numerical discretisation errors will lead to
oscillations. Whether the numerical method is suitable for computing
the evolution of geostrophically-balanced states over long time
intervals (\emph{i.e.}  suitable for weather forecasting or ocean
modelling) depends on how these errors behave. If one computes the
numerical dispersion relation (\emph{i.e.} the numerical analogue of
equation (\ref{dispersion}) for the method (see for example the
calculations in \cite{Th08,Ro+2005}) then it is possible to divide the
eigenmodes of the system into geostrophic modes which converge to the
geostrophic states as the mesh edge-lengths converge to zero, and
inertia-gravity modes which converge to the inertia-gravity waves. If
the numerical discretisation errors in the divergence of the balanced
states are large and project onto the inertia-gravity modes, then
large unbalanced dynamics will be apparent after a long time
integration interval (such as the time interval that is relevant to
weather forecasting). When solving the nonlinear equations these
numerical errors are constantly generated by the nonlinear terms,
resulting in the geostrophic component of the solution being polluted
by spurious inertia-gravity waves which render the numerical scheme
useless for NWP and global ocean modelling.

In section \ref{family} we  present a family of mixed
finite element pairs which have the optimal property that the
geostrophically balanced states are exactly steady; this means that
these numerical discretisations are in some sense optimal for
geophysical fluid dynamics problems. This property is independent of
the choice of mesh, which can be taken to be completely
unstructured. This means that the numerical discretisations can be
used to solve geophysical flow problems in the presence of mesh
refinement and adaptivity. This is proved in theorem \ref{steady thm}
in section \ref{properties}. We shall also show that the finite
element pairs satisfy an inf-sup condition which means that they are
free from spurious pressure modes: eigenmodes which have very small
discrete gradients of free surface elevation despite having a free
surface which is not flat. The absence of these modes is also crucial
for geophysical applications since they can be coupled to the physical
modes through the nonlinear terms and eventually becoming as large as
the physical solution; the existence of these modes prohibits the use
of such numerical methods in weather forecasting and ocean modelling. This
result is proved in theorem \ref{inf-sup thm}, also in section
\ref{properties}. The proofs of these results are very simple and
elegant, due to the geometric embedding conditions that define the
family of discretisations.

\section{Family of finite element pairs}
\label{family}

In this section we introduce our family of finite element pairs, first
by developing the general finite element formulation in section
\ref{fem formulation}, and then by stating conditions which define our
particular family in section \ref{choice}.
\subsection{Finite element formulation}
\label{fem formulation}
In this subsection we develop the finite element approximation to the
linearised shallow-water equations by writing down the weak form of
the equations and restricting the function spaces to chosen spaces of
piecewise polynomials on a finite element mesh.  We start with the
linearised shallow-water equations on an $f$-plane given in equations
(\ref{lswe 1}-\ref{lswe 2}) with boundary conditions given by equation
(\ref{bcs}).  To obtain the weak form of the equations we multiply
equation (\ref{lswe 1}) by a test function $\MM{w}$ and equation
(\ref{lswe 2}) by a test function $\phi$ and integrate over the domain
$\Omega$ to obtain
\begin{eqnarray}
\label{weak 1}
\dd{}{t}\int_{\Omega} \MM{w}\cdot\MM{u}\diff{V} 
+ f\int_{\Omega} \MM{w}\cdot\MM{u}^\perp\diff{V}
& = &
-g\int_{\Omega}\MM{w}\cdot\nabla \eta\diff{V}, \\
\dd{}{t}\int_{\Omega} \phi \eta \diff{V} & = & 
-\bar{D}\int_{\Omega} \phi\nabla\cdot\MM{u} \diff{V}.
\label{weak 2}
\end{eqnarray}
We then integrate equation (\ref{weak 2}) by parts, and make use of
the boundary conditions (\ref{bcs}) to obtain 
\begin{eqnarray}
\label{weak parts 1}
\dd{}{t}\int_{\Omega} \MM{w}\cdot\MM{u}\diff{V} 
+ f\int_{\Omega} \MM{w}\cdot\MM{u}^\perp\diff{V}
& = &
-g
\int_{\Omega}\MM{w}\cdot\nabla \eta\diff{V}, \\
\label{weak parts 2}
\dd{}{t}\int_{\Omega} \phi \eta \diff{V} & = & 
\bar{D}\int_{\Omega} \nabla\phi\cdot\MM{u} \diff{V},
\end{eqnarray}
which must hold for all test functions $\MM{w}\in H^1(\Omega)$ and $\phi\in
L_2(\Omega)$. This is the weak form of equations (\ref{lswe 1}-\ref{lswe 2})
which we shall discretise using the Galerkin finite element method.

The Galerkin projection of equations (\ref{weak parts 1}-\ref{weak
  parts 2}) is constructed by defining finite dimensional spaces for
the numerical solution variables $\MM{u}^\delta$ and $\eta^\delta$,
and the test functions $\MM{w}^\delta$ and $\phi^\delta$. We shall use
a mixed finite element method (see \cite{AuBrLo2004} for an excellent
general survey), which means that one type of finite element space
shall be used for $\MM{u}^\delta$ and $\MM{w}^\delta$, and a different
type of finite element space shall be used for $\eta^\delta$ and
$\phi^\delta$.

We shall begin by defining the possible finite element spaces in
general terms, before going on to state conditions which define our
particular family of discretisations.
\begin{definition}[Finite element mesh]
  Let the mesh $\mathcal{M}$ be a set of non-overlapping polygons
  (elements) which completely cover the computational domain $\Omega$
  which has a elementwise polygonal boundary $\partial\Omega$.
\label{fem}
\end{definition}
\begin{definition}[Pressure space]
  Let $H$ be a space of elementwise polynomials on $\mathcal{M}$, of
  type and continuity to be specified. 
\end{definition}
This is a general definition, but we note that we shall require at
least $C^0$ continuity across element boundaries, since we apply
gradients to $\phi^\delta$ and $\eta^\delta$ in equations
(\ref{galerkin 1}-\ref{galerkin 2}).
\begin{definition}[Velocity space]
  Let $V$ be a space of vectors of elementwise polynomials on
  $\mathcal{M}$, of type and continuity to be specified (possibly
  differently to $H$).
\label{vel space}
\end{definition}
We note that we do not require any continuity conditions for $V$, since
gradients are not applied to $\MM{u}^\delta$ and $\MM{w}^\delta$ in equations
(\ref{galerkin 1}-\ref{galerkin 2}).

Having defined $H$ and $V$, we may now write down the Galerkin finite
element method for equations (\ref{lswe 1}-\ref{lswe 2}), which is
obtained by restricting the solution variables and the test functions
to these finite dimensional spaces.
\begin{definition}[Galerkin finite element method]
  $\MM{u}^\delta(\MM{x},t)$ and $\eta^\delta(\MM{x},t)$ are the
  semi-discrete solutions of the Galerkin finite element
  discretisation of (\ref{lswe 1}-\ref{lswe 2}) if
\[
\MM{u}^\delta(\cdot,t)\in V, \quad \eta^\delta(\cdot,t) \in H, \qquad
\forall t \in [0,T],
\]
and
\begin{eqnarray}
\label{galerkin 1}
\dd{}{t}\int_{\Omega} \MM{w}^\delta\cdot\MM{u}^\delta\diff{V} 
+ f\int_{\Omega} \MM{w}^\delta\cdot{\MM{u}^\delta}^\perp\diff{V}
& = &
-g
\int_{\Omega}\MM{w}^\delta\cdot\nabla \eta^\delta\diff{V}, \\
\label{galerkin 2}
\dd{}{t}\int_{\Omega} \phi^\delta \eta^\delta \diff{V} & = & 
\bar{D}\int_{\Omega} \nabla\phi^\delta\cdot\MM{u}^\delta \diff{V},
\end{eqnarray}
for all test functions $\MM{w}^\delta\in V$, $\phi^{\delta}\in H$.
\end{definition}
This equations may be solved on a computer by expanding
$\MM{w}^\delta$ and $\MM{u}^\delta$ in a basis for $V$, and
$\phi^\delta$ and $h^\delta$ in a basis for $H$, which produces a
matrix equation for the basis coefficients of $\MM{u}^\delta$ and
$h^\delta$. This equation may then be discretised in time using a
suitable time integration method. 

\subsection{Choice of finite element spaces}
\label{choice}
In defining the problem, it remains to select a particular choice
spaces $(V,H)$ (known as a finite element pair). In this paper, we
discuss a large family of possible choices defined by the following
condition:
\begin{definition}[Embedding conditions]
\label{embedding}
\begin{enumerate}
\item The operator $\nabla$ defined by the pointwise gradient
\[
{q}^\delta(\MM{x}) = \nabla h^\delta(\MM{x})
\]
maps from $H$ into $V$.
\item The skew operator $\perp$ defined by the pointwise
formula
\[
\MM{q}^\delta(\MM{x}) = (\MM{u}^\delta(\MM{x}))^\perp
\]
maps from $V$ into itself.
\end{enumerate}
\end{definition}
These conditions are most definitely not satisfied by all possible
pairs $(V,H)$, as illustrated by the following examples.
\begin{example}[P1-P1]
  The finite element pair known as P1-P1 (which may be used for the
  shallow-water equations but requires stabilisation as described in
  \cite{WaCa1998}) is defined as follows:
\begin{itemize}
\item The mesh $\mathcal{M}$ is composed of triangular elements.
\item $H$ is the space of elementwise-linear functions $h^\delta$
  which are continuous across element boundaries.
\item $V$ is the space of vector fields $\MM{u}^\delta$ with both of
  the Cartesian components $(u^\delta,v^\delta)$ in $H$.
\end{itemize} 
Condition 1 of Definition \ref{embedding} is not satisfied by the
P1-P1 pair since gradients of functions in $H$ are discontinuous
across element boundaries. Condition 2 is satisfied since the same
continuity conditions are required for normal and tangential
components.
\end{example}
\begin{example}[RT0]
  The lowest order Raviart-Thomas \cite{RaTh1977} velocity space
  (known as RT0) is constructed on a mesh $\mathcal{M}$ composed of
  triangular elements. It consists of elementwise constant vector
  fields which are constrained to have continuous normal components
  across element boundaries. RT0 does not satisfy condition 2 of
  Definition \ref{embedding} since the $\perp$ operator transforms
  vector fields with discontinuities in the tangential component
  (which are permitted in RT0) into vector fields with discontinuities
  in the normal component (which are not).
\end{example}

We now describe some examples of finite element pairs which \emph{do}
satisfy the conditions in Definition \ref{embedding}.
\begin{example}[P0-P1]
  The finite element pair known as P0-P1 (applied to ocean modelling
  in \cite{Um+2004}, for example) is defined as follows:
\begin{itemize}
\item The mesh $\mathcal{M}$ is composed of triangular elements.
\item $H$ is the space of elementwise-linear functions $h^\delta$
  which are continuous across element boundaries.
\item $V$ is the space of elementwise-constant vectors with discontinuities
  across element boundaries permitted. 
\end{itemize}
\end{example}
\begin{example}[\pdgp]
  The finite element pair known as \pdgp \cite{CoHaPaRe2009} is
  defined as follows:
\begin{itemize}
\item The mesh $\mathcal{M}$ is composed of triangular elements.
\item $H$ is the space of elementwise-quadratic functions $h^\delta$
  which are continuous across element boundaries.
\item $V$ is the space of elementwise-linear vectors with discontinuities
  across element boundaries permitted. 
\end{itemize}
\end{example}
Each of these examples satisfy both conditions in Definition
\ref{embedding}: condition 1 holds because taking the gradient of a
elementwise polynomial $n-1$ which is continuous across element
boundaries results in a vector field which is discontinuous across
element boundaries and is composed of elementwise polynomials of one
degree $n$, and condition 2 holds since the velocity space uses the
same continuity constraints for normal and tangential components
\emph{e.g.}  both components are allowed to be discontinuous. This
defines a whole sequence of high-order \pndgpn element pairs. Similar
elements can be constructed on quadrilateral elements.  Since we only
require these two conditions to prove our optimal balance property
which holds on arbitrary meshes, we can also construct finite element
spaces on mixed meshes composed of quadrilaterals and triangles, for
example. It is also possible to use $p$-adaptivity in which different
orders of polynomials are used in different elements, as long as the 
conditions are satisfied.

\section{Geostrophic balance properties}
\label{properties}
In this section we prove that when finite element pairs which are
chosen to satisfy both of the conditions in Definition
\ref{embedding}, their discrete geostrophically-balanced velocities
satisfy the discrete divergence-free condition exactly: this means
that the discrete geostrophically-balanced states are exactly steady
states of equations (\ref{galerkin 1}-\ref{galerkin 2}). This is the
result of Theorem \ref{steady thm} in this section, which makes use of
conditions 1 and 2 of Definition \ref{embedding}. We also prove an
inf-sup condition for these finite element pairs (making use of
condition 1 of Definition \ref{embedding}) which provides a lower
bound (independent of edge-lengths in the mesh) for the discrete
gradient operator applied to non-constant functions in $H$. This lower
bound prohibits the existence of spurious pressure modes which render
a finite element pair unsuitable for geophysical flow problems; it
also allows one to prove the convergence of the numerical solutions at
the optimal rate obtained from approximation theory. This condition is
stated and proved in Theorem \ref{inf-sup thm}.

\subsection{Optimal geostrophic balance}

We first prove the following lemma which illustrates the embedding 
properties of our family of finite element pairs.
\begin{lemma}[Pointwise gradient lemma]
\label{pointwise gradient lemma}
  Let $(H,V)$ be a finite element pair chosen to satisfy condition 1
  in Definition \ref{embedding}. Let Let $\MM{q}^\delta\in V$ by the
  discrete gradient of $\eta^\delta\in H$ defined by
\begin{equation}
\label{discrete gradient}
\int_{\Omega} \MM{w}^\delta\cdot\MM{q}^\delta\diff{V}
 = 
\int_{\Omega}\MM{w}^\delta\cdot\nabla \eta^\delta\diff{V},
\end{equation}
for all test functions $\MM{w}^\delta\in V$. Then $\MM{q}^\delta$ is
the pointwise (strong) gradient of $\eta^\delta$ defined by
\[
\MM{q}^\delta(\MM{x}) = \nabla \eta^\delta(\MM{x}), \quad \forall
\MM{x}\in \Omega.
\]
\end{lemma}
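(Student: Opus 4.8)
The plan is to recognise equation (\ref{discrete gradient}) as the defining relation for the $L^2(\Omega)$-orthogonal projection of the pointwise gradient of $\eta^\delta$ onto the finite element space $V$, and then to invoke condition~1 of Definition~\ref{embedding} to observe that this pointwise gradient \emph{already} lies in $V$, so that the projection fixes it. All the content of the lemma sits in that hypothesis; the rest is the standard fact that an orthogonal projection onto a subspace acts as the identity on elements of the subspace.

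Concretely, first I would set $\MM{p}^\delta(\MM{x}) = \nabla\eta^\delta(\MM{x})$, the elementwise (strong) gradient of $\eta^\delta$. By condition~1 of Definition~\ref{embedding}, $\MM{p}^\delta\in V$, so it is a legitimate member of the trial space, and substituting it into (\ref{discrete gradient}) and subtracting gives
\[
\int_{\Omega}\MM{w}^\delta\cdot\left(\MM{q}^\delta-\MM{p}^\delta\right)\diff{V}=0
\qquad\text{for all }\MM{w}^\delta\in V .
\]
Since both $\MM{q}^\delta$ and $\MM{p}^\delta$ belong to $V$, their difference is itself an admissible test function, so choosing $\MM{w}^\delta=\MM{q}^\delta-\MM{p}^\delta$ yields
\[
\int_{\Omega}\left|\MM{q}^\delta-\MM{p}^\delta\right|^{2}\diff{V}=0 ,
\]
hence $\MM{q}^\delta=\MM{p}^\delta$ almost everywhere. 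As both are piecewise polynomials on $\mathcal{M}$, they agree pointwise on the interior of every element, which is the assertion $\MM{q}^\delta(\MM{x})=\nabla\eta^\delta(\MM{x})$.

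There is essentially no hard step: the whole argument is the observation above, and the only minor points to tidy are (i)~that the discrete gradient is well defined to begin with, which follows because the bilinear form on the left of (\ref{discrete gradient}) is the symmetric positive-definite $V$ mass matrix; and (ii)~that ``pointwise'' is to be read up to the measure-zero set of element boundaries, where $\MM{q}^\delta$ need not be single-valued. I expect the main value of presenting it carefully is to stress where condition~1 is used: without it (for instance for P1--P1, where $\nabla\eta^\delta$ is discontinuous and hence not in $V$) the $L^2$ projection genuinely discards information, and this is exactly the mechanism by which such pairs fail to preserve discrete geostrophic balance, motivating Theorem~\ref{steady thm}.
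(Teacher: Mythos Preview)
Your proof is correct and follows essentially the same approach as the paper: both use condition~1 to place $\nabla\eta^\delta$ in $V$, substitute the difference $\MM{q}^\delta-\nabla\eta^\delta$ as the test function in (\ref{discrete gradient}), and conclude from the vanishing $L_2$-norm. Your framing in terms of $L_2$-orthogonal projection and the additional remarks on well-posedness and the measure-zero caveat are welcome clarifications, but the underlying argument is the same.
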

\begin{proof}
Since condition 1 is satisfied, we may choose a test function
\[
\MM{w}^\delta = \MM{q}^\delta-\nabla\eta^\delta\in V,
\]
and substitution into equation \ref{discrete gradient} gives
\begin{eqnarray*}
0&=&\int_{\Omega}
\MM{w}^\delta\cdot\left(\MM{q}^\delta-\nabla\eta^\delta\right)\diff{V} \\
&=&\int_{\Omega} \left|\MM{q}^\delta-\nabla\eta^\delta\right|^2\diff{V} \\
&=& \|\MM{q}^\delta -\nabla\eta^\delta\|^2_{L_2}.
\end{eqnarray*}
Since the $L_2$-norm only vanishes for elements of $H$ if they are
identically zero, we conclude that $\MM{q}^\delta=\nabla\eta^\delta$
as required.
\end{proof}
This lemma appears at first sight to be a tautology but since the
discrete gradient $\MM{q}^\delta$ can be thought of as the
$L_2$-projection of $\nabla\eta^\delta$ into $V$, it requires
condition 1 of Definition \ref{embedding} to be satisfied, and it is
not the case for equal-order element pairs such as P1-P1, for
example. We shall make use of this lemma in proving Theorem
\ref{inf-sup thm}. 

The following lemma extends this technique to show that if the
discrete geostrophic balance relation is satisfied by functions taken
from a finite element pair in our family of discretisations, then the
exact geostrophic balance condition is actually satisfied at each
point.
\begin{lemma}[Embedding lemma]
  \label{embedding lemma}
  Let $(H,V)$ be a finite element pair chosen to satisfy both
  conditions in Definition \ref{embedding}. 
Let $\MM{u}^\delta\in V$
  and $\eta^\delta\in H$ satisfy the discrete geostrophic balance
  relation
\begin{equation}
\label{discrete balance}
f\int_{\Omega} \MM{w}^\delta\cdot{\MM{u}^\delta}^\perp\diff{V}
 = 
-g
\int_{\Omega}\MM{w}^\delta\cdot\nabla \eta^\delta\diff{V},
\end{equation}
for all test functions $\MM{w}^\delta\in V$. Then 
\begin{equation}
\label{strong balance}
f(\MM{u}^\delta)^\perp(\MM{x}) = 
-g\nabla \eta^\delta(\MM{x}), \quad \forall
\MM{x}\in \Omega.
\end{equation}
\end{lemma}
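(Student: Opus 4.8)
The plan is to mimic the proof of the Pointwise gradient lemma, but this time exploiting \emph{both} embedding conditions so that the test function we want to substitute actually lies in $V$. First I would observe that, by condition~1 of Definition~\ref{embedding}, the pointwise gradient $\nabla\eta^\delta$ belongs to $V$, and hence by condition~2 its skew $(\nabla\eta^\delta)^\perp$ also belongs to $V$. Likewise $\MM{u}^\delta\in V$ gives $(\MM{u}^\delta)^\perp\in V$. Therefore the combination
\[
\MM{w}^\delta = f(\MM{u}^\delta)^\perp + g\nabla\eta^\delta \in V
\]
is an admissible test function in the discrete balance relation (\ref{discrete balance}).

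Next I would rewrite (\ref{discrete balance}) as a single integral identity
\[
\int_{\Omega}\MM{w}^\delta\cdot\left(f(\MM{u}^\delta)^\perp + g\nabla\eta^\delta\right)\diff{V} = 0
\quad\text{for all }\MM{w}^\delta\in V,
\]
and substitute the particular $\MM{w}^\delta$ above. This yields
\[
0 = \int_{\Omega}\left|f(\MM{u}^\delta)^\perp + g\nabla\eta^\delta\right|^2\diff{V}
= \bigl\|f(\MM{u}^\delta)^\perp + g\nabla\eta^\delta\bigr\|_{L_2}^2,
\]
so the non-negative integrand must vanish almost everywhere, and since it is piecewise polynomial it vanishes at every $\MM{x}\in\Omega$. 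That gives (\ref{strong balance}) directly.

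I would then add a remark that the only subtlety — and the single place where the argument could fail — is the \emph{membership} of the test function in $V$: without condition~1 the term $g\nabla\eta^\delta$ would live in a larger (typically discontinuous) space, and without condition~2 the term $f(\MM{u}^\delta)^\perp$ might fall outside $V$ even though $\MM{u}^\delta$ itself is in $V$ (this is exactly what goes wrong for RT0). So the ``hard part'' is not any calculation but recognising that the two geometric embedding conditions are precisely what is needed to close the Galerkin orthogonality onto the residual itself, after which the proof is the same one-line $L_2$-positivity argument as in Lemma~\ref{pointwise gradient lemma}. Finally I would note the immediate corollary used later: since $(\MM{u}^\delta)^\perp$ is a pointwise gradient, its pointwise divergence is $\partial_x\partial_y\eta^\delta - \partial_y\partial_x\eta^\delta = 0$, which feeds directly into the proof that balanced states are steady in Theorem~\ref{steady thm}.
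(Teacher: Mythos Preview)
Your proof is correct and follows essentially the same approach as the paper: both choose the test function $\MM{w}^\delta = f(\MM{u}^\delta)^\perp + g\nabla\eta^\delta$, justify its membership in $V$ via the two embedding conditions, and substitute into (\ref{discrete balance}) to obtain $\|f(\MM{u}^\delta)^\perp + g\nabla\eta^\delta\|_{L_2}^2=0$. Your added remarks (the piecewise-polynomial upgrade from a.e.\ to pointwise, and the observation about where each condition is used) are helpful elaborations but not departures from the paper's argument.
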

\begin{proof}
Following the technique of the previous lemma, we note that
conditions 1 and 2 mean that we may choose a test function
\[
\MM{w}^\delta = f{\MM{u}^\delta}^\perp + g\nabla\eta^\delta\in V,
\]
and substitute into equation \ref{discrete balance} to
obtain
\begin{eqnarray*}
0&=&\int_{\Omega}
\MM{w}^\delta\cdot\left(f{\MM{u}^\delta}^\perp+g\nabla\eta^\delta
\right)\diff{V} \\
&=&\int_{\Omega} \left|f{\MM{u}^\delta}^\perp+g\nabla\eta^\delta
\right|^2\diff{V} \\
&=&\|f{\MM{u}^\delta}^\perp +g\nabla\eta^\delta\|^2_{L_2},
\end{eqnarray*}
hence the result.
\end{proof}

This lemma is useful for initialising the system variables in a
balanced state since the geostrophic balance relation can be evaluated
pointwise instead of requiring a projection to be computed.  Next we
apply this lemma to prove the following theorem, which states that
finite element pairs have the optimal property that these
geostrophically balanced states are steady solutions of equations
(\ref{galerkin 1}-\ref{galerkin 2}).
\begin{theorem}[Steady geostrophic states]
\label{steady thm}
Let $\MM{u}^\delta\in V$ and $\eta^\delta\in H$ satisfy equation
(\ref{discrete balance}), and let $\partial\Omega$ be a contour
for $\eta^\delta$ (so that the balanced velocity field obtained from
equation \ref{strong balance} satisfies the boundary condition).
Then $\MM{u}^\delta$ and $\eta^\delta$ are
steady solutions of equation (\ref{galerkin 1}-\ref{galerkin 2}).
\end{theorem}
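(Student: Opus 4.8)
The plan is to show that both time-derivative terms in the Galerkin equations (\ref{galerkin 1}-\ref{galerkin 2}) vanish when we plug in a pair $(\MM{u}^\delta,\eta^\delta)$ satisfying the discrete balance relation. First I would handle equation (\ref{galerkin 1}): the right-hand side is $-g\int_\Omega \MM{w}^\delta\cdot\nabla\eta^\delta\diff{V}$, and by the discrete balance relation (\ref{discrete balance}) this equals $f\int_\Omega \MM{w}^\delta\cdot(\MM{u}^\delta)^\perp\diff{V}$, which is exactly the second term on the left. Hence $\dd{}{t}\int_\Omega \MM{w}^\delta\cdot\MM{u}^\delta\diff{V}=0$ for all $\MM{w}^\delta\in V$; taking $\MM{w}^\delta$ to range over a basis of $V$ shows that all the mass-matrix-weighted coefficients of $\partial_t\MM{u}^\delta$ are zero, so (since the velocity mass matrix is positive definite) $\MM{u}^\delta_t=0$.

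Next I would turn to equation (\ref{galerkin 2}). Its right-hand side is $\bar D\int_\Omega \nabla\phi^\delta\cdot\MM{u}^\delta\diff{V}$. Here the Embedding Lemma (Lemma \ref{embedding lemma}) does the work: it gives the pointwise identity $f(\MM{u}^\delta)^\perp=-g\nabla\eta^\delta$, equivalently $\MM{u}^\delta = -(g/f)\,(\nabla\eta^\delta)^\perp = (g/f)\nabla^\perp\eta^\delta$. Substituting, the right-hand side becomes $(\bar D g/f)\int_\Omega \nabla\phi^\delta\cdot\nabla^\perp\eta^\delta\diff{V}$. I would then integrate by parts (or simply expand in Cartesian components): $\nabla\phi^\delta\cdot\nabla^\perp\eta^\delta = \phi^\delta_x\eta^\delta_x\!\big|_{\!\perp} = -\phi^\delta_x\eta^\delta_y+\phi^\delta_y\eta^\delta_x$, and $\int_\Omega(-\phi^\delta_x\eta^\delta_y+\phi^\delta_y\eta^\delta_x)\diff{V} = \int_{\partial\Omega}\phi^\delta\,\nabla^\perp\eta^\delta\cdot\MM{n}\diff{s}$ after an integration by parts (the interior terms cancel because $\eta^\delta$ is $C^0$, so $\nabla\eta^\delta$ has the regularity needed for the cross-derivatives to cancel on each element and across faces). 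The boundary integrand $\nabla^\perp\eta^\delta\cdot\MM{n}$ is the tangential derivative of $\eta^\delta$ along $\partial\Omega$; since $\partial\Omega$ is assumed to be a contour of $\eta^\delta$, this tangential derivative is zero, so the boundary term vanishes. Therefore $\dd{}{t}\int_\Omega\phi^\delta\eta^\delta\diff{V}=0$ for all $\phi^\delta\in H$, and by positive-definiteness of the pressure mass matrix, $\eta^\delta_t=0$.

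Combining the two, $\MM{u}^\delta_t=0$ and $\eta^\delta_t=0$, so the pair is a steady solution of (\ref{galerkin 1}-\ref{galerkin 2}), as claimed.

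The main obstacle, and the only step requiring any care, is the boundary-term argument in equation (\ref{galerkin 2}): one must justify that $\int_\Omega\nabla\phi^\delta\cdot\nabla^\perp\eta^\delta\diff{V}$ reduces cleanly to a boundary integral. The cleanest route is to note that $\nabla\phi^\delta\cdot\nabla^\perp\eta^\delta = \nabla\cdot(\phi^\delta\nabla^\perp\eta^\delta)$ pointwise on each element (using $\nabla\cdot\nabla^\perp\eta^\delta=0$ wherever $\eta^\delta$ is smooth, i.e. in each element interior), then apply the divergence theorem elementwise; the inter-element face contributions cancel because $\phi^\delta$ is continuous and the normal component of $\nabla^\perp\eta^\delta$ (the tangential derivative of $\eta^\delta$ along the shared edge) is single-valued since $\eta^\delta\in C^0$. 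What survives is $\int_{\partial\Omega}\phi^\delta(\nabla^\perp\eta^\delta\cdot\MM{n})\diff{s}$, which vanishes by the contour hypothesis. One should also remark that the velocity field $\MM{u}^\delta$ defined pointwise by (\ref{strong balance}) does lie in $V$ — this is exactly conditions 1 and 2 of Definition \ref{embedding} together — so that it is a legitimate element of the discrete solution space, and that it satisfies $\MM{u}^\delta\cdot\MM{n}=0$ on $\partial\Omega$ by the same contour property, consistently with the boundary condition (\ref{bcs}).
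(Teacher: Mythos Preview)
Your proposal is correct and follows essentially the same route as the paper: invoke Lemma~\ref{embedding lemma} to obtain the pointwise relation $\MM{u}^\delta=(g/f)\nabla^\perp\eta^\delta$, substitute into the divergence integral, integrate by parts elementwise using $\nabla\cdot\nabla^\perp\eta^\delta=0$ on each element, cancel the interior face contributions via continuity of $\phi^\delta$ and single-valuedness of the tangential derivative of $\eta^\delta$, and kill the boundary term with the contour hypothesis. The only cosmetic difference is that you deduce $\MM{u}^\delta_t=0$ and $\eta^\delta_t=0$ from invertibility of the mass matrices (testing against all of $V$ and $H$), whereas the paper tests against $\MM{u}^\delta$ and $\eta^\delta$ themselves to get $\dd{}{t}\|\MM{u}^\delta\|_{L_2}^2=0$ and $\dd{}{t}\|\eta^\delta\|_{L_2}^2=0$; your version is arguably the cleaner of the two.
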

\begin{proof}
  Substitution into equation (\ref{galerkin 1}), and choosing
$\MM{w}^\delta=\MM{u}^\delta$ gives
\[
\dd{}{t}\int_{\Omega} |\MM{u}^\delta|^2\diff{V}=0, 
\]
and hence $\MM{u}_t^\delta=0$. It remains to show that
\begin{equation}
\label{steady eta}
\dd{}{t}\int_\Omega\phi^\delta\eta^\delta\diff{V} =
\bar{D}\underbrace{\int_\Omega\nabla\phi^\delta\cdot\MM{u}^\delta\diff{V}}
_{\mbox{divergence integral}} = 0,
\end{equation}
for all test functions $\phi^\delta\in H$. 

By lemma \ref{embedding lemma}, equation \ref{strong balance} is 
satisfied, which we may substitute into the divergence integral
to obtain 
\begin{eqnarray}
\int_\Omega\nabla\phi^\delta\cdot\MM{u}^\delta\diff{V} & = &
\frac{g}{f}\int_\Omega \nabla\phi^\delta\cdot\nabla^\perp\eta^\delta\diff{V}.
\end{eqnarray}
The right-hand side integral in this equation can be shown to vanish
for all $\phi^\delta$ and $\eta^\delta$ in $H^1$ (which contains our
velocity space $H$): the proof is obtained by taking $\phi^\delta$ and
$\eta^\delta$ as the limit of a convergent sequence of continuous
functions in $H^1$ for which the sequence can be shown to vanish after
integration by parts (see \cite{Ev1998}, for example). Here we provide
a more direct proof which is obtained by integrating by parts
separately in each element (since the gradients are discontinuous
across element boundaries) to obtain
\begin{eqnarray*}
\int_\Omega \nabla\phi^\delta\cdot\nabla^\perp\eta^\delta\diff{V} 
&=& \sum_{E\in\mathcal{M}}
\int_E \nabla\phi^\delta\cdot\nabla^\perp\eta^\delta\diff{V}  \\
\{\mbox{integration by parts}\}
& = & -\sum_{E\in\mathcal{M}}\int_E\phi^\delta
\underbrace{\nabla\cdot\nabla^\perp\eta^\delta}_{=0}\diff{V} \\
 & & \quad +\sum_{E\in\mathcal{M}}\int_{\partial E}
\phi^\delta\MM{n}\cdot\nabla^\perp\eta^\delta\diff{V} \\
 & = & \sum_{\Gamma\in\mathcal{M}, \Gamma\cap\partial\Omega=\emptyset}\int_{\Gamma}
\underbrace{[[\phi^\delta\nabla^\perp\eta^\delta]]}_{=0}\diff{V} \\
& & \quad + \int_{\partial\Omega}\phi^\delta\underbrace{\MM{n}\cdot\nabla^\perp
\eta^\delta}_{=0}\diff{S},
\end{eqnarray*}
where $E$ indicates an element in the mesh $\mathcal{M}$, $\partial E$
is the boundary of $E$ with outward pointing normal $\MM{n}$, $\Gamma$
indicates an edge in the mesh $\mathcal{M}$ with an arbitrary chosen
normal direction, and $[[\MM{u}]]$ indicates the jump in the normal
component of a vector field $\MM{u}$ across an edge $\Gamma$. In the
final line, the first term vanishes since $\phi^\delta$ is continuous
across element boundaries, and the tangential components of $\nabla
\phi^\delta$ are also continuous (to see this, note that the
tangential gradient of $\phi^\delta$ on the boundary is obtained by
integrating in the direction of the boundary where $\eta^\delta$ is
continuously-differentiable). The second term vanishes since the
domain boundary $\partial\Omega$ is a contour for $\eta^\delta$ (from
the assumption in the theorem), and so the tangential derivative of
$\eta^\delta$ vanishes there.

Hence, equation (\ref{steady eta}) is satisfied. Choosing 
$\phi^\delta=\eta^\delta$ yields
\[
\dd{}{t}\int_\Omega\left(\eta^\delta\right)^2\diff{V} = 0,
\]
and hence $\eta^\delta_t=0$ and we have a steady state solution of
equations (\ref{galerkin 1}-\ref{galerkin 2}).
\end{proof}
\begin{remark}
This proof corrects the proof presented in \cite{CoHaPa2009}.
\end{remark}
\begin{remark}
Note that this theorem does not depend in any way on the structure of
the mesh $\mathcal{M}$ and so applies to arbitrary finite element
discretisations on unstructured meshes, provided that the conditions
of Definition \ref{embedding} are satisfied. 
\end{remark}

We checked this theorem numerically by taking a completely
unstructured mesh, randomly generating $\eta^\delta$ and
$\MM{u}^\delta$ fields which satisfy the conditions of the theorem,
and integrated equations (\ref{galerkin 1}-\ref{galerkin 2}) using the
implicit-midpoint rule. The problem was solved in dimensionless
variables in a $1\times 1$ square domain with Rossby number $\Ro=0.1$
and Froude number $\Fr=1.0$, with a timestep size $\Delta t=0.01$ for
1000 steps. The maximum relative error between the initial and final
$\eta^\delta$ fields was numerically zero (\emph{i.e.} round-off error
was observed) for each random realisation over hundreds of tests. Some
example fields are shown in figure \ref{randoms}. These images
illustrate that the optimal balance result is completely independent
of the mesh and the smoothness of the solution. Some more general
convergence tests using Kelvin waves are provided in
\cite{CoHaPa2009}.

\begin{figure}
\begin{center}
\hspace{0mm}
\includegraphics*[width=6cm]{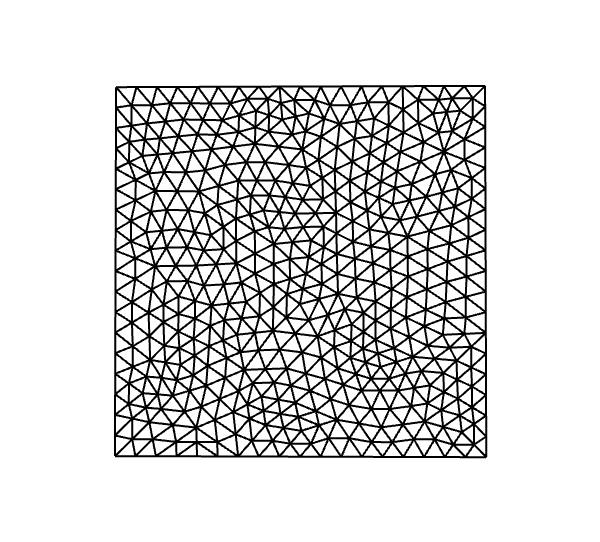}
\includegraphics*[width=6cm]{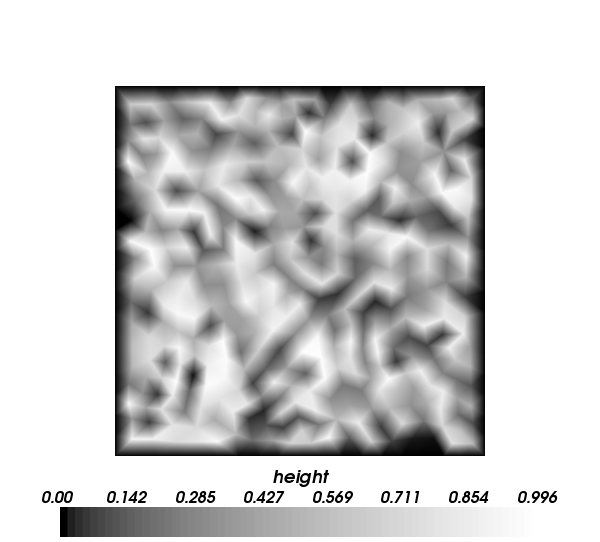} \\
\includegraphics*[width=6cm]{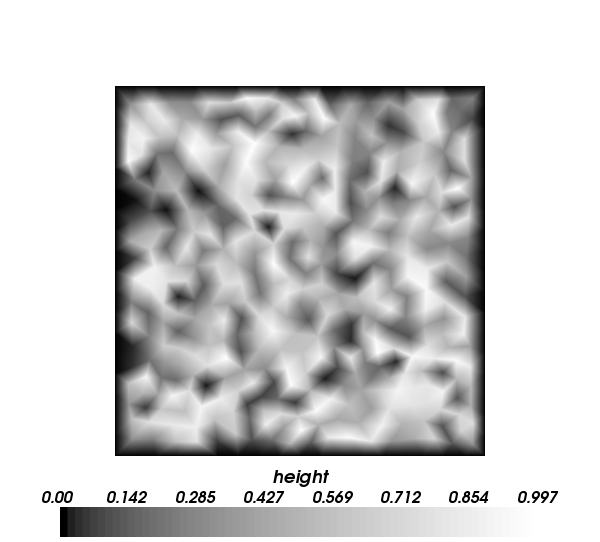}
\includegraphics*[width=6cm]{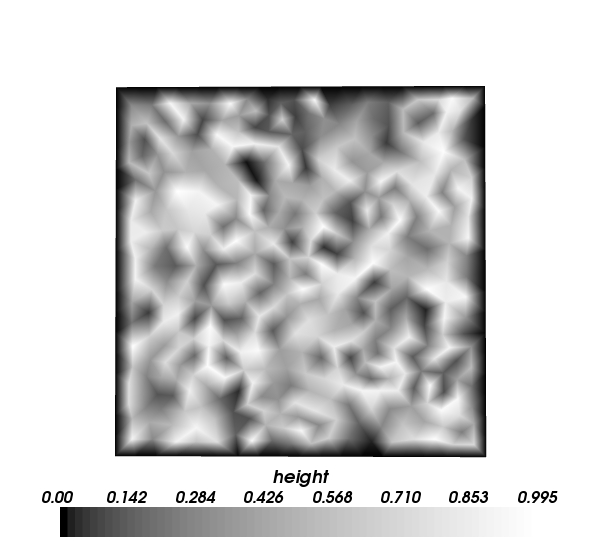} \\
\end{center}
\caption{\label{randoms} Figure showing random $\eta^\delta$ fields
  used to verify the optimal balance property described in Theorem
  \ref{steady thm}. Randomly generated $\eta^\delta$ and
  $\MM{u}^\delta$ fields (constrained to satisfy the balance
  conditions of the theorem) were used as initial conditions for a
  numerical integration with the implicit-midpoint rule in time
  applied to equations (\ref{galerkin 1}-\ref{galerkin 2}). The
  difference between the initial and final $\eta^\delta$ fields was
  observed to be numerically zero for all random realisations that
  were tested. Note that the steady state does not depend on the
  smoothness of the solution or on the mesh structure.}
\end{figure}

\subsection{Inf-sup condition: pressure modes}
We next prove that the discretisation given by
equations (\ref{galerkin 1}-\ref{galerkin 2}) using a finite element
pair satisfying our two conditions does not have any spurious pressure
modes. These are modes which converge (in the limit as the mesh
edge-lengths go to zero) to steady solutions with zero velocity even
though the free surface displacement $\eta^\delta$ is not flat. These
modes are catastrophic for discretisations of the nonlinear equations
since they can grow to dominate the solution when coupled to the
physical modes. These modes are not present if it is possible to bound
the gradient of non-constant functions in $H$ from below as the
edge-lengths go to zero. This bound is expressed in the following
inf-sup condition:
\begin{definition}[Inf-sup condition]
The inf-sup condition for a finite element pair $(H,V)$ requires that
\begin{equation}
\label{inf-sup eqn}
\sup_{\MM{w}^\delta\in V}\frac{\int_\Omega\MM{w}^\delta
\cdot\nabla\phi^\delta\diff{V}}
{\sqrt{\int_{\Omega}|\MM{w}^\delta|^2\diff{V}}}
\geq \beta \sqrt{\int_{\Omega}(\phi^\delta)^2\diff{V}},
\quad \forall \phi^\delta \in H,
\end{equation}
where $\beta$ is independent of the edge lengths in the mesh
$\mathcal{M}$.
\label{inf sup condition}
\end{definition}
\begin{theorem}[Inf-sup theorem]
\label{inf-sup thm}
Let $(H,V)$ be a finite element pair which satisfies condition 1 in
definition \ref{embedding}. Then $(H,V)$ satisfies the inf-sup condition
in definition \ref{inf sup condition}. 
\end{theorem}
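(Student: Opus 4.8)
The plan is to use condition 1 of Definition \ref{embedding} in exactly the manner of the proof of Lemma \ref{pointwise gradient lemma}: because $\nabla$ maps $H$ into $V$, for any fixed $\phi^\delta\in H$ the field $\nabla\phi^\delta$ is itself an admissible choice of $\MM{w}^\delta$ in the supremum on the left of (\ref{inf-sup eqn}). First I would substitute $\MM{w}^\delta=\nabla\phi^\delta$, so that the numerator becomes $\int_\Omega|\nabla\phi^\delta|^2\diff{V}=\|\nabla\phi^\delta\|_{L_2}^2$ and the denominator becomes $\|\nabla\phi^\delta\|_{L_2}$, whence the supremum is bounded below by $\|\nabla\phi^\delta\|_{L_2}$. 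Equivalently, writing $\MM{q}^\delta\in V$ for the discrete gradient of $\phi^\delta$ defined by (\ref{discrete gradient}), Cauchy--Schwarz shows the supremum equals $\|\MM{q}^\delta\|_{L_2}$, and Lemma \ref{pointwise gradient lemma} identifies $\MM{q}^\delta$ with the pointwise gradient $\nabla\phi^\delta$; this is the step that genuinely uses condition 1, since without it the $L_2$-projection of $\nabla\phi^\delta$ onto $V$ could be strictly smaller in norm and could even degenerate under mesh refinement.

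It then remains to bound $\|\nabla\phi^\delta\|_{L_2}$ from below by a mesh-independent multiple of $\|\phi^\delta\|_{L_2}$. Here I would observe that every $\phi^\delta\in H$ is a $C^0$ piecewise polynomial and hence lies in $H^1(\Omega)$, with elementwise gradient equal to its weak gradient, and then invoke the Poincar\'e--Wirtinger inequality on the bounded connected Lipschitz domain $\Omega$: there is a constant $C_P=C_P(\Omega)$, depending only on the shape of $\Omega$ and not on the mesh $\mathcal{M}$ nor on the polynomial degrees used, such that $\|\phi^\delta-\bar\phi^\delta\|_{L_2}\leq C_P\|\nabla\phi^\delta\|_{L_2}$, where $\bar\phi^\delta$ is the mean value of $\phi^\delta$ over $\Omega$. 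Since $\nabla\bar\phi^\delta=0$ and $\phi^\delta-\bar\phi^\delta\in H$, combining the two estimates yields (\ref{inf-sup eqn}) with $\beta=1/C_P$, understood on $H$ modulo constants (on the one-dimensional kernel of $\nabla$ both sides of (\ref{inf-sup eqn}) are trivial, and in the shallow-water setting the constant part of $\eta$ is dynamically inert since it is conserved, so this restriction is harmless).

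The proof is thus only a couple of lines once Lemma \ref{pointwise gradient lemma} is available; the only point needing care --- and the reason the statement has genuine content --- is that $\beta$ must be traced to the \emph{continuous} Poincar\'e constant of $\Omega$, rather than to any discrete Poincar\'e-type inequality whose constant might blow up under $h$- or $p$-refinement. I would also include one explicit sentence handling the constants (and, for full generality, disconnected domains, where ``constant'' should read ``constant on each connected component'').
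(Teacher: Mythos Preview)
Your proposal is correct and follows essentially the same route as the paper: choose $\MM{w}^\delta=\nabla\phi^\delta\in V$ (admissible by condition 1) to reduce the supremum to $\|\nabla\phi^\delta\|_{L_2}$, then bound this below by a mesh-independent multiple of $\|\phi^\delta\|_{L_2}$. The only cosmetic difference is that you invoke the Poincar\'e--Wirtinger inequality while the paper phrases the same bound via the Rayleigh quotient for the minimum non-zero eigenvalue of $-\Delta$ on $\Omega$; your treatment of the kernel of $\nabla$ (constants) is in fact slightly more explicit than the paper's.
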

\begin{proof}
  The supremum in equation (\ref{inf-sup eqn}) is bounded from below
  by any particular choice of test function $\MM{w}^\delta$. By
  condition 1 $\nabla\phi^\delta\in V$ and so we may choose
  $\MM{w}^\delta=\nabla\phi^\delta$. Substitution then gives
\begin{eqnarray*}
\sup_{\MM{w}^\delta\in V}\frac{\int_\Omega\MM{w}^\delta
\cdot\nabla\phi^\delta\diff{V}}
{\sqrt{\int_{\Omega}|\MM{w}^\delta|^2\diff{V}}} & 
\geq & \frac{\int_\Omega\nabla\phi^\delta
\cdot\nabla\phi^\delta\diff{V}}
{\sqrt{\int_{\Omega}|\nabla\phi^\delta|^2\diff{V}}} \\
& = & \sqrt{\int_{\Omega}|\nabla\phi^\delta|^2\diff{V}} \\
& = & \sqrt{\frac{\int_{\Omega}|\nabla{\phi^\delta}|^2\diff{V}}
{\int_{\Omega}{\phi^\delta}^2\diff{V}}}
\sqrt{\int_{\Omega}{\phi^\delta}^2\diff{V}}, \\
& \geq &\sqrt{\lambda_{\min}}\sqrt{\int_{\Omega}{\phi^\delta}^2\diff{V}},
\end{eqnarray*}
where $\lambda_{\min}$ is the minimum non-zero eigenvalue of 
(minus) the Laplacian on $\Omega$, having made use of the Rayleigh
quotient
\[
\lambda_{\min} = \min_{\phi^\delta\neq 0}
\frac{\int_{\Omega}|\nabla\phi^\delta|^2\diff{V}}
{\int_{\Omega}{\phi^\delta}^2\diff{V}}.
\]
\end{proof}

We note that the family of finite element pairs considered in this
paper corresponds to one particular option for discrete differential
forms satisfying a discrete de Rham complex condition described in
\cite{ArFaWi2006}. In fact, condition 1 of Definition \ref{embedding}
corresponds to all of the options described in that paper.

A consequence of the inf-sup theorem, as discussed in
\cite{AuBrLo2004,ArFaWi2006b}, is that the solutions of the wave
equation converge at the optimal rate described by the theory of
numerical interpolation, as described in the following corollary:
\begin{corollary}
  Given an interval $[0,T]$, there exists a constant $C(T)$ such that
\[
\|\MM{u}(\cdot,T)-\MM{u}^\delta(\cdot,T)\|_{L_2} + \|\eta(\cdot,T)-\eta^\delta(\cdot,T)\|_{L_2} \leq
C(T)h^{k+1}
\]
where $(\MM{u},\eta)$ is the solution of equations (\ref{weak
  1}-\ref{weak 2}) with initial conditions $(\MM{u}_0,\eta_0)$,
$(\MM{u}^\delta,\eta^\delta)$ is the solution of equations
(\ref{galerkin 1}-\ref{galerkin 2}) with initial conditions
$(\MM{u}^\delta_0,\eta^\delta_0)$ which satisfy the interpolation
condition
\[
\|\MM{u}^\delta_0-\MM{u}_0\|_{L_2} +
\|\eta^\delta_0-\eta_0\|_{L_2}\leq ch^{k+1}
\]
and where $k$ is the minimum of the orders of the elementwise
polynomials used to construct $\MM{u}^\delta$ and $\eta^\delta$.
\end{corollary}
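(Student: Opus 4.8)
The plan is to run the classical energy argument for Galerkin semidiscretisations, using the embedding conditions of Definition \ref{embedding} to annihilate the consistency errors that would otherwise limit the rate. First I would record Galerkin orthogonality. Since $V\subset L_2(\Omega)^2$ and $H\subset H^1(\Omega)$ (recall $H$ is at least $C^0$), every discrete test function is admissible in the weak form (\ref{weak parts 1}--\ref{weak parts 2}), which is also satisfied by the (sufficiently regular) exact solution; subtracting (\ref{galerkin 1}--\ref{galerkin 2}) from it gives, for all $\MM{w}^\delta\in V$ and $\phi^\delta\in H$,
\begin{align*}
\int_\Omega \MM{w}^\delta\cdot(\MM{u}-\MM{u}^\delta)_t\,\diff{V} + f\int_\Omega\MM{w}^\delta\cdot(\MM{u}-\MM{u}^\delta)^\perp\,\diff{V} &= -g\int_\Omega\MM{w}^\delta\cdot\nabla(\eta-\eta^\delta)\,\diff{V}, \\
\int_\Omega\phi^\delta(\eta-\eta^\delta)_t\,\diff{V} &= \bar{D}\int_\Omega\nabla\phi^\delta\cdot(\MM{u}-\MM{u}^\delta)\,\diff{V}.
\end{align*}

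Next I would split the errors with projections: $\MM{u}-\MM{u}^\delta=\MM{\rho}+\MM{\theta}$ with $\MM{\rho}=\MM{u}-\Pi_V\MM{u}$ and $\MM{\theta}=\Pi_V\MM{u}-\MM{u}^\delta\in V$, and $\eta-\eta^\delta=\sigma+\psi$ with $\sigma=\eta-\Pi_H\eta$ and $\psi=\Pi_H\eta-\eta^\delta\in H$, taking for concreteness $\Pi_V$ and $\Pi_H$ the $L_2$-orthogonal projections onto $V$ and $H$. The embedding conditions are exactly what make the $\MM{\rho}$- and $\sigma$-contributions drop out: $\int_\Omega\MM{w}^\delta\cdot\MM{\rho}_t\,\diff{V}=0$ by $L_2$-orthogonality (since $\Pi_V$ commutes with time differentiation); $\int_\Omega\MM{w}^\delta\cdot\MM{\rho}^\perp\,\diff{V}=-\int_\Omega(\MM{w}^\delta)^\perp\cdot\MM{\rho}\,\diff{V}=0$ because condition~2 of Definition \ref{embedding} returns $(\MM{w}^\delta)^\perp$ to $V$; $\int_\Omega\nabla\phi^\delta\cdot\MM{\rho}\,\diff{V}=0$ because condition~1 puts $\nabla\phi^\delta\in V$; and $\int_\Omega\phi^\delta\sigma_t\,\diff{V}=0$ again by $L_2$-orthogonality. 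What survives is a reduced error system for $(\MM{\theta},\psi)\in V\times H$ whose only forcing is the single approximation term $-g\int_\Omega\MM{w}^\delta\cdot\nabla\sigma\,\diff{V}$ in the first equation, together with the retained coupling terms $-g\int_\Omega\MM{w}^\delta\cdot\nabla\psi\,\diff{V}$ and $\bar{D}\int_\Omega\nabla\phi^\delta\cdot\MM{\theta}\,\diff{V}$.

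Then I would close the estimate by energy. Testing the first reduced equation with $\MM{w}^\delta=\MM{\theta}$ (scaled by $\bar{D}$) and the second with $\phi^\delta=\psi$ (scaled by $g$) and adding, the Coriolis term drops since $\int_\Omega\MM{\theta}\cdot(\MM{\theta})^\perp\,\diff{V}=0$, and the two coupling terms cancel exactly (the same antisymmetry that gives discrete energy conservation). Writing $\mathcal{E}(t)=\tfrac{\bar{D}}{2}\|\MM{\theta}\|_{L_2}^2+\tfrac{g}{2}\|\psi\|_{L_2}^2$, this leaves $\dd{\mathcal{E}}{t}=-g\bar{D}\int_\Omega\MM{\theta}\cdot\nabla\sigma\,\diff{V}\le C\sqrt{\mathcal{E}}\,\|\nabla\sigma\|_{L_2}$, so (regularising $\sqrt{\mathcal{E}}$ by $\sqrt{\mathcal{E}+\varepsilon}$, letting $\varepsilon\to0$, and integrating over $[0,T]$) $\sqrt{\mathcal{E}(T)}\le\sqrt{\mathcal{E}(0)}+C\int_0^T\!\|\nabla\sigma(\cdot,s)\|_{L_2}\,\diff{s}$. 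Polynomial approximation theory now bounds $\|\MM{\rho}\|_{L_2}$, $\|\sigma\|_{L_2}$ and $\|\nabla\sigma\|_{L_2}$ by $Ch^{k+1}$ times Sobolev norms of $\MM{u}$ and $\eta$ --- here it is essential that each pair in this family has $H$ carrying polynomials exactly one degree higher than $V$, so that the $H^1$-seminorm of the pressure-projection error still scales like $h^{k+1}$ rather than $h^{k}$ (this uses condition~1 of Definition \ref{embedding}, just as in Lemma \ref{pointwise gradient lemma} and Theorem \ref{inf-sup thm}); $\mathcal{E}(0)=O(h^{2(k+1)})$ follows from the hypothesis on the initial data together with the projection errors at $t=0$; and the triangle inequality $\|\MM{u}(\cdot,T)-\MM{u}^\delta(\cdot,T)\|_{L_2}\le\|\MM{\rho}(\cdot,T)\|_{L_2}+\|\MM{\theta}(\cdot,T)\|_{L_2}$, and similarly for $\eta$, then delivers the claimed bound, with $C(T)$ absorbing the time integral of the (propagated) Sobolev norms of the exact solution.

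I expect the main obstacle to be the treatment of the pressure projection and its consistency term. The tempting shortcut --- using a gradient-commuting projection with $\nabla\Pi_H\eta=\Pi_V\nabla\eta$, which would kill $\int_\Omega\MM{w}^\delta\cdot\nabla\sigma\,\diff{V}$ entirely --- is not available in general, since it would force $\Pi_V\nabla\eta$ to lie in the range of $\nabla$ restricted to $H$; so one instead keeps a generic $H^1$-stable projection and must verify that $\|\nabla\sigma\|_{L_2}=O(h^{k+1})$, which is where the ``one-degree-richer'' structure of the pressure space does the work (on meshes for which $L_2$-stability in $H^1$ fails one may substitute a Scott--Zhang quasi-interpolant, at the cost of one further, higher-order, consistency term in the second equation, handled the same way). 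The only remaining ingredient, propagating enough Sobolev regularity of $(\MM{u},\eta)$ on $[0,T]$ for those norms to be finite, is classical for this linear, constant-coefficient, energy-conserving system and is already implicit in the statement, since approximating the data at rate $h^{k+1}$ presupposes that it lies in the corresponding Sobolev space.
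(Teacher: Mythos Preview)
Your argument is correct, but it takes a different route from the paper's. The paper does not actually give a proof here: it simply cites standard mixed finite element convergence theory and emphasises that the key ingredient is the inf-sup condition of Theorem \ref{inf-sup thm}. By contrast, you run a self-contained energy estimate and never invoke inf-sup at all; instead you use both embedding conditions of Definition \ref{embedding} directly --- condition 2 to kill the Coriolis consistency term $\int_\Omega\MM{w}^\delta\cdot\MM{\rho}^\perp\,\diff{V}$ and condition 1 to kill the divergence consistency term $\int_\Omega\nabla\phi^\delta\cdot\MM{\rho}\,\diff{V}$ --- so that only the single forcing term $\int_\Omega\MM{\theta}\cdot\nabla\sigma\,\diff{V}$ survives. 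This is more explicit and arguably more in the spirit of the paper, since it shows concretely how the geometric structure of the pair drives the rate. The price is the extra hypothesis you identify: your bound on $\|\nabla\sigma\|_{L_2}$ needs $H$ to carry polynomials at least one degree higher than $V$, which is true for every example in the paper (the \pndgpn sub-family) but is not forced by Definition \ref{embedding} alone, since condition 1 only gives $\deg H\le\deg V+1$. The paper's inf-sup route, in principle, does not need that matching of degrees and would still deliver $h^{k+1}$ with $k$ the minimum order; your route is sharper about the mechanism but slightly narrower in scope. The caveat you flag about $H^1$-stability of the $L_2$ projection onto $H$, and the Scott--Zhang fix, is apt and handled correctly.
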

\begin{proof}
  The proof follows using standard mixed finite element techniques,
  namely obtaining a bound on the $L_2$-norm of the solution variables
  $\MM{u}^\delta$ and $\eta^\delta$ which requires the inf-sup
  condition. See \cite{Jo2003}, \cite{AuBrLo2004} or
  \cite{ArFaWi2006b}, for example.
\end{proof}

For the \pdgp element pair, this convergence property (in this case
2nd-order convergence since $k=1$) was confirmed from numerical
experiments for the 2-dimensional wave equation in
\cite{CoHaPaRe2009}, and for the rotating linear shallow-water
equations on an $f$-plane in \cite{CoHaPa2009}.

\section{Three dimensional incompressible flow}
\label{3d}
In this section we briefly discuss the extension of these properties
to the equations of three dimensional rotating stratified
nonhydrostatic incompressible flow (Boussinesq equations) which,
together with their hydrostatic counterparts, are used in ocean
modelling.

\subsection{Model equations and geostrophic states}

The full equations of motion are:
\begin{eqnarray}
  \MM{u}_t + (\MM{u}\cdot\nabla)\MM{u} + f\MM{k}\times\MM{u} 
  & = & -\frac{1}{\rho_0}\nabla p + \MM{k}b, \label{momentum} \\
  \nabla\cdot\MM{u} & = & 0, \label{continuity} \\
  T_t + \MM{u}\cdot\nabla T & = & 0, \label{temp}
\end{eqnarray}
where $\MM{u}$ is the three-dimensional velocity field, $f$ is the
Coriolis parameter, $\MM{k}$ is the unit upward vector, $p$ is the
pressure, $b$ is the buoyancy and $T$ is the temperature (salinity
would also be included in a full ocean model but does not add anything
to this discussion). The system is closed by specifying an equation of
state in which the buoyancy $b$ is defined as a function of $T$ and
$p$. Here we have made the \emph{traditional approximation} which
restricts the rotation vector to the vertical axis, and the $f$-plane
approximation for which $f$ is a constant.

In three dimensions there are two geophysical balances which are present
in slowly-varying large-scale flows which arise when the acceleration
terms are small compared to the Coriolis and buoyancy. In the vertical 
direction we obtain the hydrostatic balance
\begin{equation}
\label{hydros}
-\frac{1}{\rho_0}p_z + b = 0,
\end{equation}
which can be imposed as a constraint in a hydrostatic model, allowing
the pressure to be computed explicitly from the buoyancy by vertical
integration. In the horizontal direction we again obtain the
geostrophic balance
\begin{equation}
u = -\frac{1}{\rho_0f}p_y, \quad v = \frac{1}{\rho_0f}p_x.
\label{geobal}
\end{equation}
It is simple to check again that $\nabla\cdot\MM{u}=0$ for these
balanced states. In the case of incompressible flow, this means that 
the balanced states are solutions of the equations given above.

One can again take a numerical discretisation of equations
(\ref{momentum}-\ref{temp}), construct the discrete balanced solutions
and check if the discrete form of equation \ref{continuity} is
satisfied exactly. If there is some residual, then this means that it
is not possible for exactly-balanced states to exist in the numerical
discretisation, which can lead to to the generation of spurious
internal inertia-gravity waves. For example, if a pressure-projection
method is used for timestepping (as is typical for non-hydrostatic
models) then the time-integrator has two stages: the first stage takes
a momentum step using the pressure from the previous timestep, then
the solution is projected back to satisfy the discrete form of
equation \ref{continuity}. If the balanced states do not satisfy this
equation, then each projection will generate further spurious
unbalanced motion.

In this section we shall briefly describe an extension of our family
of finite-element pairs to the three-dimensional case, and describe
the extension of our optimal balance results. Since we are still
concerned with wave propagation we linearise equations 
(\ref{momentum}-\ref{temp}) about the state
\[
\MM{u}=0, \quad p_z = \rho_0b, \quad T = \bar{T}(\MM{z}),
\]
to obtain 
\begin{eqnarray}
\label{linear momentum}
  \MM{u}_t + f\MM{k}\times\MM{u} & = & 
-\frac{1}{\rho_0}\nabla p' + \MM{k}\gamma T', \\
\label{linear div u}
  \nabla\cdot\MM{u} & = & 0, \\
  T'_t + u_3\bar{T}_z & = & 0, \label{linear temp}
\end{eqnarray}
where $\gamma$ is a suitable positive constant. We shall drop the
primes for the rest of the section. For these equations, steady
balanced states given by
\[
u_3 = 0, \quad p_z = \rho_0\gamma T, \quad u = -\frac{1}{\rho_0f}p_y,
\quad v = \frac{1}{\rho_0f}p_x,
\]
satisfy $\nabla\cdot\MM{u}=0$ and hence are admissible solutions of
the equations.

\subsection{Finite element formulation}

Defining a Galerkin finite element method for these equations requires
us to define a finite element space $\Theta$ for the temperature
variable $T^\delta$. The choice of this space is not very important
for the discussion of geostrophically-balanced states, so it will not
be developed much here, except to note that it \emph{is} important to
ensure that there are sufficiently many states which satisfy a
discrete hydrostatic balance, otherwise the representation of
hydrostatic balance will be poor, leading to spurious non-hydrostatic
motion. The velocity space $V$ and the pressure space $H$ are defined
by the three dimensional extensions of definitions \ref{fem}-\ref{vel
  space}. 

We now define the Galerkin finite element method for equations
(\ref{linear momentum}-\ref{linear temp}) as follows:
\begin{definition}[Galerkin finite element method for 3D wave propagation]
  $\MM{u}^\delta(\MM{x},t)$, $p^\delta(\MM{x},t)$, $T^\delta(\MM{x},t)$ are the
  semi-discrete solutions of the Galerkin finite element
  discretisation of (\ref{linear momentum}-\ref{linear temp}) if
\[
\MM{u}^\delta(\cdot,t)\in V, \quad p^\delta(\cdot,t) \in H, \quad
T^\delta(\cdot,t)\in \Theta, \qquad \forall t \in [0,T],
\]
and
\begin{eqnarray}
\label{galerkin 3d 1}
\dd{}{t}\int_{\Omega} \MM{w}^\delta\cdot\MM{u}^\delta\diff{V} 
+ f\int_{\Omega} \MM{w}^\delta\cdot\MM{k}\times\MM{u}^\delta\diff{V}
& = &
-\frac{1}{\rho_0}
\int_{\Omega}\MM{w}^\delta\cdot\nabla p^\delta\diff{V}
+ \gamma\MM{k}\cdot\int_\Omega\MM{w}^\delta T^\delta \diff{V}
, \\
\label{galerkin 3d 2}
-\int_{\Omega} \nabla\phi^\delta\cdot\MM{u}^\delta \diff{V} & = & 0, \\
\dd{}{t}\int_{\Omega}\theta^\delta T^\delta \diff{V} + \MM{k}\cdot\int_{\Omega}
\theta^\delta\MM{u}^\delta \bar{T}^\delta_z\diff{V} & = & 0, 
\label{galerkin 3d 3}
\end{eqnarray}
for all test functions $\MM{w}^\delta\in V$, $\phi^{\delta}\in H$, 
$\theta^\delta\in \Theta$.
\end{definition}
We again require our element pair $(H,V)$ to satisfy the conditions in
Definition \ref{embedding}, extended to three dimensions (with the
$\perp$ operator replaced by the $\MM{k}\times$ operator). We next
define the discrete geophysical balances using this discretisation.

\begin{definition}[Discrete hydrostatic and geostrophic balance]
The solution variables $\MM{u}^\delta$, $p^\delta$ and $T^\delta$
satisfy the hydrostatic and geostrophic balances if
\begin{equation}
\label{hydro geo discrete}
f\int_{\Omega} \MM{w}^\delta\cdot\MM{k}\times\MM{u}^\delta\diff{V}
 = 
-\frac{1}{\rho_0}
\int_{\Omega}\MM{w}^\delta\cdot\nabla p^\delta\diff{V}
+ \gamma\MM{k}\cdot\int_\Omega\MM{w}^\delta T^\delta \diff{V}.
\end{equation}
The vertical component specifies hydrostatic balance and the 
horizontal component specifies geostrophic balance.
\end{definition}
This definition allows us to state the optimal balance theorem for
three-dimensional incompressible flow, which we give in the next
subsection.
\subsection{Optimal balance properties, inf-sup theorem and optimal pressure
matrix property}

\begin{theorem}[Optimal balance for three-dimensional incompressible flow]
  Let $\MM{u}^\delta\in V$, $p^\delta \in H$ and $T^\delta \in \Theta$
  be chosen so that equation (\ref{hydro geo discrete}) is satisfied,
  with zero vertical velocity $u^\delta_3=0$ and the pressure
  $p^\delta$ satisfying the pointwise condition
\[
\MM{\tau}_H\cdot\nabla p^\delta = 0, \quad
\forall \MM{x}\in \partial\Omega,
\]
where $\tau_H$ is the horizontal tangent vector to $\partial\Omega$ so
that the boundary is a streamline for the balanced flow (consistent
with the boundary condition for $\MM{u}^\delta)$.  Then
$(\MM{u}^\delta,p^\delta,T^\delta)$ is a steady state solution of
equations (\ref{galerkin 3d 1}-\ref{galerkin 3d 3}).
\end{theorem}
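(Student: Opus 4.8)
The plan is to follow the structure of the proof of Theorem~\ref{steady thm}: to show that the balanced state $(\MM{u}^\delta,p^\delta,T^\delta)$, held fixed in time, satisfies each of the Galerkin equations~(\ref{galerkin 3d 1}-\ref{galerkin 3d 3}); since the semidiscrete system has a unique solution for given consistent initial data, this constant-in-time state is then the semidiscrete solution and is therefore steady. Two of the three equations are essentially immediate: setting $\MM{u}^\delta_t=0$ in~(\ref{galerkin 3d 1}) reduces that equation to the assumed discrete balance~(\ref{hydro geo discrete}), and setting $T^\delta_t=0$ in~(\ref{galerkin 3d 3}) leaves $\MM{k}\cdot\int_\Omega\theta^\delta\MM{u}^\delta\bar{T}^\delta_z\diff{V}=\int_\Omega\theta^\delta u^\delta_3\bar{T}^\delta_z\diff{V}$, which vanishes for every $\theta^\delta\in\Theta$ because $u^\delta_3=0$ by hypothesis. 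So the real content is to establish the discrete incompressibility equation~(\ref{galerkin 3d 2}), namely that $\int_\Omega\nabla\phi^\delta\cdot\MM{u}^\delta\diff{V}=0$ for all $\phi^\delta\in H$.

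One route to~(\ref{galerkin 3d 2}) is to prove a three-dimensional Embedding Lemma in the manner of Lemma~\ref{embedding lemma}, deducing pointwise hydrostatic and geostrophic balance from~(\ref{hydro geo discrete}). The obstacle there is the buoyancy term $\gamma\MM{k}\cdot\int_\Omega\MM{w}^\delta T^\delta\diff{V}$: the natural test function $\MM{w}^\delta=f\MM{k}\times\MM{u}^\delta+\frac{1}{\rho_0}\nabla p^\delta-\gamma T^\delta\MM{k}$ lies in $V$ only if $T^\delta\MM{k}\in V$, which is a compatibility condition on the deliberately unspecified temperature space $\Theta$. I would avoid this by using the hypothesis $u^\delta_3=0$ and testing~(\ref{hydro geo discrete}) directly against $\MM{w}^\delta=\MM{k}\times\nabla\phi^\delta$, which is admissible since condition~1 of Definition~\ref{embedding} gives $\nabla\phi^\delta\in V$ and condition~2 keeps $\MM{k}\times\nabla\phi^\delta\in V$. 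Its vertical component vanishes, killing the buoyancy term; moreover $(\MM{k}\times\nabla\phi^\delta)\cdot(\MM{k}\times\MM{u}^\delta)$ equals the horizontal product $\nabla_H\phi^\delta\cdot\MM{u}^\delta_H$, which equals $\nabla\phi^\delta\cdot\MM{u}^\delta$ because $u^\delta_3=0$. The upshot is an identity of the shape
\[
f\int_\Omega\nabla\phi^\delta\cdot\MM{u}^\delta\diff{V}=-\frac{1}{\rho_0}\int_\Omega(\MM{k}\times\nabla\phi^\delta)\cdot\nabla p^\delta\diff{V}=\frac{1}{\rho_0}\int_\Omega\nabla_H\phi^\delta\cdot\nabla_H^\perp p^\delta\diff{V},
\]
where $\nabla_H$ and $\nabla_H^\perp$ denote the purely horizontal gradient and skew gradient.

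It then remains to show that the last integral vanishes, which is the three-dimensional analogue of the divergence-integral computation in the proof of Theorem~\ref{steady thm}. I would integrate by parts element by element (the horizontal gradients being discontinuous across element faces), viewing $\nabla_H^\perp p^\delta$ as a vector field with vanishing vertical component: its divergence is $-p^\delta_{xy}+p^\delta_{yx}=0$ within each element; the jump terms on interior faces vanish because $\phi^\delta$ is $C^0$ and the in-face derivatives of $p^\delta$ are single-valued; and the boundary contribution over $\partial\Omega$ reduces to an integral over the vertical side walls only, since on the horizontal top and bottom faces $\nabla_H^\perp p^\delta$ is orthogonal to the (vertical) normal. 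On the side walls the integrand is, up to sign, $\phi^\delta\,(\MM{\tau}_H\cdot\nabla p^\delta)$, which vanishes by the streamline hypothesis on $p^\delta$. Hence~(\ref{galerkin 3d 2}) holds for the balanced state, and the proof closes.

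I expect the buoyancy/temperature term to be the main obstacle — concretely, proving the theorem without tacitly strengthening the hypotheses on $\Theta$. The choice $\MM{w}^\delta=\MM{k}\times\nabla\phi^\delta$, available precisely because of conditions~1 and~2 of Definition~\ref{embedding}, is what removes it; after that the argument is a transcription of the two-dimensional proof, the only new bookkeeping being the separation of horizontal and vertical components and the handling of a domain boundary with both vertical side walls and horizontal top and bottom faces.
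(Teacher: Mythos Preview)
Your proof is correct and lands in the same place as the paper's: both reduce the verification of equation~(\ref{galerkin 3d 2}) to showing $\int_\Omega\nabla_H\phi^\delta\cdot\nabla_H^\perp p^\delta\,\diff V=0$, handled element by element exactly as in Theorem~\ref{steady thm}. The only difference is how you reach that identity. The paper first asserts a three-dimensional embedding lemma giving the pointwise horizontal balance $\MM{u}^\delta=(-\psi^\delta_y,\psi^\delta_x,0)$ with $\psi^\delta=p^\delta/(f\rho_0)$ and then substitutes into the divergence integral; you instead test~(\ref{hydro geo discrete}) directly against $\MM{w}^\delta=\MM{k}\times\nabla\phi^\delta$, which kills the buoyancy term without any appeal to the structure of $\Theta$. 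This is a minor tactical variation rather than a different route---the paper's pointwise lemma can also be obtained from a purely horizontal test function (available in $V$ via condition~2 applied twice), so no hidden assumption on $\Theta$ is actually needed there either---but your version makes that independence explicit.
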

\begin{proof}
  It is simple to check that $\MM{T}^\delta_t=0$, $\MM{u}^\delta_t=0$
  by inserting the solutions into the equations and noting that all of
  the terms vanish, and it remains to check that equation
  \ref{galerkin 3d 2} is satisfied. The proof proceeds exactly as the
  proof of Theorem \ref{steady thm}, by first noting that the two
  conditions in Definition \ref{embedding} imply that 
\[
\MM{u}^\delta = (-\psi^\delta_y,\psi^\delta_x), \quad \psi^\delta =
\frac{1}{f\rho_0}p^\delta
\]
pointwise, which we then insert into equation \ref{galerkin 3d 2}
to obtain
\[
-\int_{\Omega} \nabla\phi^\delta\cdot\MM{u}^\delta \diff{V}  =  
\int_{\Omega} \phi^\delta_x\psi^\delta_y - \phi^\delta_y\psi^\delta_x \diff{V}  =  
0,
\]
using a similar argument to the previous proof.
\end{proof}

This means that balanced states are exactly steady and do not generate
any spurious internal waves. Note that this theorem is again
completely independent of the mesh structure and so the finite element
pairs in this family are ideal for representing balanced flows on
unstructured meshes such as those proposed in \cite{Pa+2005}.

As for the 2D case, it is necessary for the finite element spaces to
satisfy an inf-sup condition, so that solutions of the linearised
equations converge at the optimal rate defined by approximation
theory. In the case of incompressible flow, one also forms a pressure
Poisson equation by composing the discrete divergence and gradient
operators, and if the inf-sup condition is not satisfied then there
are very small spurious eigenvalues in the matrix which make iterative
solvers very slow (see \cite{GrSa2000}, for example). Our family of
finite element pairs satisfy the inf-sup condition in three dimensions
which can be shown by simple extension of the proof of Theorem
\ref{inf-sup thm}. For incompressible flow, we shall use the
same techniques to prove further properties of the discrete Poisson
matrix. 

For the continuous equations, the Poisson equation is formed by taking
the divergence of equation (\ref{linear momentum}) and applying
equation (\ref{linear div u}) to obtain
\begin{equation}
\label{pressure poisson}
\nabla^2 p = \nabla\cdot\MM{r} = \nabla\cdot(
\rho_0\MM{k}\gamma T -   \rho_0f\MM{k}\times\MM{u}),
\end{equation}
which specifies an equation for $p$ given $T$ and $\MM{u}$. This
equation must be solved at each timestep to calculate the pressure
field. When the Galerkin finite element method is applied, this
specifies a coupled system of equations for $\MM{p}$ given by
\begin{eqnarray}
\label{p eqn 1}
\int_{\Omega}\MM{w}^\delta\cdot\MM{r}^\delta\diff{V}
& = &
- \rho_0f\int_{\Omega} \MM{w}^\delta\cdot\MM{k}\times\MM{u}^\delta\diff{V}
+ \rho_0\gamma\MM{k}\cdot\int_\Omega\MM{w}^\delta T^\delta \diff{V}
, 
\\
\label{p eqn 2}
\int_{\Omega}\MM{w}^\delta\cdot\MM{q}^\delta\diff{V} 
&=&\int_{\Omega}\MM{w}^\delta\cdot\nabla\MM{p}^\delta\diff{V}, \\
\label{p eqn 3}
\int_{\Omega} \nabla\phi^\delta\cdot\MM{q}^\delta \diff{V} & = & 
-\int_{\Omega} \nabla\phi^\delta\cdot\MM{r}^\delta\diff{V},
\end{eqnarray}
for $p^\delta\in H$, $\MM{q}^\delta,\MM{r}^\delta\in V$ and for all
test functions $\phi^\delta\in H$ and $\MM{w}^\delta \in V$. In practise,
the variables $\MM{q}^\delta$ and $\MM{r}^\delta$ are eliminated to
obtain an equation for $p^\delta$. This then ensures that equation
(\ref{galerkin 3d 2}) is satisfied at each instance in time (or each
timestep, having discretised the equations in time). This system gives
rise to a matrix equation for the basis function coefficients which,
in general can have a larger sparsity pattern since it involve the
product of several matrices (see \cite{GrSa2000}, for example).  In the
following theorem, we show that this sparsity pattern is reduced when
condition 1 of Definition \ref{embedding} is satisfied, which is a
further useful property of the family of finite element pairs
discussed in this paper.

\begin{theorem}[Optimal sparsity of pressure matrix]
Let $p^\delta$, $\MM{q}^\delta$, $\MM{r}^\delta$ be the solutions of
equations (\ref{p eqn 1}-\ref{p eqn 2}). Then
\begin{equation}
\label{galerkin}
\int_{\Omega} \nabla\phi^\delta\cdot\nabla p^\delta \diff{V}  = 
-\int_{\Omega} \nabla\phi^\delta\cdot\MM{r}^\delta\diff{V},
\end{equation}
for all test functions $\phi^\delta\in H$, which is the usual Galerkin
discretisation for equation (\ref{pressure poisson}).
\end{theorem}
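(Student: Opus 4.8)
The plan is to exploit the Pointwise Gradient Lemma (Lemma~\ref{pointwise gradient lemma}) exactly as it stands. The key observation is that equation (\ref{p eqn 2}) is precisely the definition of the discrete gradient of $p^\delta$ appearing in equation (\ref{discrete gradient}): it says that $\MM{q}^\delta\in V$ is the $L_2$-projection of $\nabla p^\delta$ into $V$. Since the element pair $(H,V)$ satisfies condition~1 of Definition~\ref{embedding}, Lemma~\ref{pointwise gradient lemma} applies and tells us that in fact $\MM{q}^\delta(\MM{x})=\nabla p^\delta(\MM{x})$ pointwise. In other words, the auxiliary variable $\MM{q}^\delta$ is not merely a projection of the gradient but equals the strong gradient of $p^\delta$ everywhere in $\Omega$.

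Given this, the proof is essentially a substitution. First I would take equation (\ref{p eqn 3}), which reads $\int_\Omega \nabla\phi^\delta\cdot\MM{q}^\delta\diff{V} = -\int_\Omega \nabla\phi^\delta\cdot\MM{r}^\delta\diff{V}$ for all $\phi^\delta\in H$. Then I would replace $\MM{q}^\delta$ by $\nabla p^\delta$ using the pointwise identity just established, which immediately yields
\[
\int_{\Omega} \nabla\phi^\delta\cdot\nabla p^\delta \diff{V} = -\int_{\Omega} \nabla\phi^\delta\cdot\MM{r}^\delta\diff{V},
\]
for all $\phi^\delta\in H$, which is equation (\ref{galerkin}) as claimed. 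One small point to note is that the theorem statement refers to equations (\ref{p eqn 1}-\ref{p eqn 2}) but the argument also uses (\ref{p eqn 3}); I would either read the range as (\ref{p eqn 1}-\ref{p eqn 3}) or simply cite all three equations explicitly in the proof.

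There is no real obstacle here: the entire content is the recognition that (\ref{p eqn 2}) is an instance of (\ref{discrete gradient}), so that Lemma~\ref{pointwise gradient lemma} collapses the composition of discrete gradient, identity on $V$, and discrete divergence into the genuine Galerkin Laplacian stencil. The only thing worth emphasising in the write-up is \emph{why} this matters: without condition~1, $\MM{q}^\delta$ would genuinely differ from $\nabla p^\delta$, the elimination of $\MM{q}^\delta$ and $\MM{r}^\delta$ would produce a matrix of the form (divergence matrix)$\times$(velocity mass matrix)$^{-1}\times$(gradient matrix), and the inverse mass matrix — being dense — would fill in the sparsity pattern. Condition~1 is exactly what guarantees that this product telescopes to the sparse stiffness matrix $\int_\Omega\nabla\phi^\delta\cdot\nabla p^\delta\diff{V}$, giving the optimal stencil.
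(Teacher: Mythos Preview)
Your proposal is correct and matches the paper's own proof essentially verbatim: invoke Lemma~\ref{pointwise gradient lemma} to identify $\MM{q}^\delta$ with $\nabla p^\delta$ pointwise, then substitute into equation~(\ref{p eqn 3}). Your observation that the argument also requires equation~(\ref{p eqn 3}) (not just (\ref{p eqn 1})--(\ref{p eqn 2}) as the statement literally says) is a valid catch of a minor typo in the theorem statement.
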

\begin{proof}
We again make use of lemma \ref{pointwise gradient lemma} which states
that 
\[
\MM{q}^\delta = \nabla p^\delta
\]
at each point. Substitution into equation (\ref{p eqn 3}) gives the result.
\end{proof}
Since the usual Galerkin discretisation of equation (\ref{pressure
  poisson}) results in a single equation that does not require the
elimination of variables, this results in a much sparser stencil for
the matrix equation for the basis function coefficients of $p^\delta$.

\section{Summary and outlook}
\label{summary}
In this paper we defined a large family of finite element
discretisations for the rotating shallow-water equations and the three
dimensional equations of rotating stratified incompressible flow. When
applied to the linear rotating shallow water equations, these
discretisations were shown satisfy the optimal property that
geostrophically-balanced states are completely steady, which mirrors a
property of the solutions of the continuous equations. It was also
shown that the discretisations in the family satisfy an inf-sup
condition which prohibits the existence of spurious pressure
modes. This makes the discretisations in the family strong
candidates for use in NWP and ocean modelling in cases where
triangular elements are required, \emph{e.g.} to allow the use of
geodesic grids. Furthermore, the proofs are independent of the mesh
structure which means that the family of discretisations produce
stable results in the presence of adaptive mesh refinement.  We then
discussed the extension of the family to three-dimensional
incompressible flow, required for ocean modelling, and showed that the
discretisations in the family result in exactly steady balanced states
on completely arbitrary unstructured meshes in three dimensions.  In
addition, the properties of the family were used to show that the
matrix obtained from the discretised pressure Poisson equation has an
optimally sparse stencil.

In future work, we will develop and test discretisations of the
fully-nonlinear equations using choices from our family of finite
element spaces, particularly from the \pndgpn sub-family applied to
the rotating shallow-water equations and the three dimensional
incompressible equations. Elements of that sub-family have
discontinuous velocity, which allows a discontinuous Galerkin
treatment of the advection terms in the momentum equation. We also
plan to compute numerical dispersion relations for these
discretisations when applied to the geodesic grid, in order to make
comparisons with other element pairs and finite difference methods on
this grid. The \pdgp element is currently being implemented in the
Imperial College Ocean Model, and will be developed into a
$p$-adaptive scheme in which different order polynomials are used in
different elements.

\bibliography{fox1}

\begin{thebibliography}{10}

\bibitem{ArFaWi2006b}
Douglas~N. Arnold, Richard~S. Falk, and Ragnar Winther.
\newblock Differential complexes and stability of finite element methods. {I}.
  the de {R}ham complex.
\newblock In {\em IMA Volumes in Mathematics and its Applications}, volume 142,
  pages 47--68. Springer, 2006.

\bibitem{ArFaWi2006}
Douglas~N. Arnold, Richard~S. Falk, and Ragnar Winther.
\newblock Finite element exterior calculus, homological techniques, and
  applications.
\newblock {\em Acta Numerica}, 15:1--155, 2006.

\bibitem{AuBrLo2004}
F.~Auricchio, F.~Brezzi, and C.~Lovadina.
\newblock Mixed finite element methods.
\newblock In {\em Encyclopedia of Computational Mechanics}, volume~1,
  chapter~9. Wiley, 2004.

\bibitem{CoHaPa2009}
C.~J. Cotter, D.~A. Ham, and C.~C. Pain.
\newblock A mixed discontinuous/continuous finite element pair for
  shallow-water ocean modelling.
\newblock {\em Ocean Modelling}, 26:86--90, 2009.

\bibitem{CoHaPaRe2009}
C.~J. Cotter, D.~A. Ham, C.~C. Pain, and S.~Reich.
\newblock {LBB} stability of a mixed finite element pair for fluid flow
  simulations.
\newblock {\em J. Comp. Phys.}, 228(3):336--348, 2009.

\bibitem{CoRe2006}
C.~J. Cotter and S.~Reich.
\newblock Semigeostrophic particle motion and exponentially accurate normal
  forms.
\newblock {\em Multiscale Model Sim.}, 5:476--496, 2006.

\bibitem{Da+2005}
T.~Davies, M.~J.~P. Cullen, A.~J. Malcolm, M.~H. Mawson, A.~Staniforth, A.~A.
  White, and N.~Wood.
\newblock A new dynamical core for the {M}et {O}ffice's global and regional
  modelling of the atmosphere.
\newblock {\em Q. J. Roy. Met. Soc}, 131(608):1759--1782, 2005.

\bibitem{Ev1998}
L.~C. Evans.
\newblock {\em Partial Differential Equations}.
\newblock Grad. Stud. Math. Amer. Math. Soc., 1998.

\bibitem{FoMcNo2000}
R.~Ford, M.E. McIntyre, and W.A. Norton.
\newblock Balance and the slow quasimanifold: Some explicit results.
\newblock {\em J. Atmos. Sci.}, 57:1236--1254, 2000.

\bibitem{GrSa2000}
P.~M. Gresho and R.~L. Sani.
\newblock {\em Incompressible Flow and the Finite Element Method, Volume 2,
  Isothermal Laminar Flow}.
\newblock Wiley, 2000.

\bibitem{Jo2003}
Patrick Joly.
\newblock {\em Topics in Computational Wave Propagation: Direct and Inverse
  Problems}, chapter~6.
\newblock Springer, 2003.

\bibitem{Ro+2005}
D.Y. Le~Roux, A.~S\`ene, V.~Rostand, and E.~Hanert.
\newblock On some spurious mode issues in shallow-water models using a linear
  algebra approach.
\newblock {\em Ocean Modelling}, pages 83--94, 2005.

\bibitem{RoStLi1998}
D.Y. Le~Roux, A.~Staniforth, and C.~A. Lin.
\newblock Finite elements for shallow-water equation ocean models.
\newblock {\em Monthly Weather Review}, 126(7):1931--1951, 1998.

\bibitem{Ma+2002}
D.~Majewski, D.~Liermann, P.~Prohl, B.~Ritter, M.~Buchhold, T.~Hanisch,
  G.~Paul, W.~Wergen, , and J.~Baumgardner.
\newblock The operational global icosahedral–hexagonal gridpoint model {GME}:
  Description and high-resolution tests.
\newblock {\em Mon. Wea. Rev.}, 130:319--338, 2002.

\bibitem{McNo2000}
M.~E. McIntyre and W.A. Norton.
\newblock Potential vorticity inversion on a hemisphere.
\newblock {\em J. Atmos. Sci.}, 57:1214--1235, 2000.

\bibitem{MoDr2001}
A.~R. Mohebalhojeh and D.G. Dritschel.
\newblock Hierarchies of balance conditions for the f-plane shallow-water
  equations.
\newblock {\em J. Atmos. Sci.}, 58:2411--2426, 2001.

\bibitem{OlOlVa2008}
E.~I. Olfasdottir, A.~B. Olde Daalhuis~A. B., and J.~Vanneste.
\newblock Inertia-gravity-wave generation by a sheared vortex.
\newblock {\em J. Fluid Mech.}, 569:169--189, 2008.

\bibitem{Pa+2005}
C.C. Pain, M.D. Piggott, A.J.H. Goddard, F.~Fang, G.J. Gorman, D.P. Marshall,
  M.D. Eaton, P.W. Power, and C.R.E. de~Oliveira.
\newblock Three-dimensional unstructured mesh ocean modelling.
\newblock {\em Ocean Modelling}, 10:5--33, 2005.

\bibitem{RaTh1977}
Raviart and Thomas.
\newblock A mixed finite element method for 2nd order elliptic problems.
\newblock In {\em Mathematical Aspects of the Finite Element Method}, Lecture
  Notes in Mathematics. Springer, Berlin, 1977.

\bibitem{Ri+2000}
T.~D. Ringler, R.P. Heikes, , and D.A. Randall.
\newblock Modeling the atmospheric general circulation using a spherical
  geodesic grid: A new class of dynamical cores.
\newblock {\em Mon. Wea. Rev.}, 128:2471--2490, 2000.

\bibitem{Sa1998}
R.~L. Salmon.
\newblock {\em Lectures on Geophysical Fluid Dynamics}.
\newblock OUP, 1998.

\bibitem{Sa+2008}
M.~Satoh, T.~Matsuno, H.~Tomita, H.~Miura, T.~Nasuno, and S.~Iga.
\newblock Nonhydrostatic icosahedral atmospheric model {(NICAM)} for global
  cloud resolving simulations.
\newblock {\em J. Comp. Phys.}, 227(7):3486--3514, 2008.

\bibitem{Th08}
J.~Thuburn.
\newblock Numerical wave propagation on the hexagonal {C}-grid.
\newblock {\em J. Comp. Phys.}, 227(11):5836--5858, 2008.

\bibitem{Um+2004}
G.~Umgiesser, D.~M. Canu, A.~Cucco, and C.~Solidoro.
\newblock A finite element model for the {V}enice {L}agoon. {D}evelopment, set
  up, calibration and validation.
\newblock {\em Journal of Marine Systems}, 51(1-4):123--145, 2004.

\bibitem{WaCa1998}
R.A. Walters and V.~Casulli.
\newblock A robust, finite element model for hydrostatic surface water flows.
\newblock {\em Communications in Numerical Methods in Engineering},
  14:931--940, 1998.

\end{thebibliography}

\end{document}